\newcommand{\href}[2]{#2}
\newcommand{\arxiv}[1]{\href{http://www.arXiv.org/abs/#1}{arXiv:#1}}
\newcommand{\R}{\mathbb{R}}
\newcommand{\C}{\mathbb{C}}
\newcommand{\diag}{\operatorname{diag}}
\newcommand{\rmax}{\R_{\max}}
\newcommand{\rmaxm}{\R_{\max,\times}}
\newcommand{\trop}{\mathsf{t}}
\newcommand{\zero}{\mathbb{0}}
\newcommand{\unit}{\mathbb{1}}
\newcommand{\set}[2]{\{#1\mid #2\}}
\newcommand{\NEW}[1]{{\em #1}\index{#1}}
\newcommand{\cond}{\operatorname{cond}}
\newcommand{\spec}{\operatorname{spec}}
\begin{document}

\title*{Tropical Scaling of Polynomial Matrices}
\author{St\'ephane Gaubert\and Meisam Sharify}

\institute{St\'ephane Gaubert \at INRIA Saclay -- \^Ile-de-France \& Centre de Math\'ematiques appliqu\'ees, Ecole Polytechnique,
91128 Palaiseau, France, \email{Stephane.Gaubert@inria.fr}
\and Meisam Sharify \at INRIA Saclay -- \^Ile-de-France \& Centre de Math\'ematiques appliqu\'ees, Ecole Polytechnique,
91128 Palaiseau, France, \email{Meisam.Sharify@inria.fr}}
%
%
\maketitle

\abstract*{The eigenvalues of a matrix polynomial can be determined classically
by solving a generalized eigenproblem for a linearized matrix pencil,
for instance by writing the matrix polynomial in companion form.
We introduce a general scaling technique, based on tropical algebra, 
which applies in particular to this companion form. This scaling,
which is inspired by an earlier work of Akian, Bapat, and Gaubert,
relies on the computation of ``tropical roots''. We give explicit
bounds, in a typical case, indicating that these roots provide accurate estimates of the order
of magnitude of the different eigenvalues, and we show by experiments
that this scaling improves the accuracy (measured by
normwise backward error) of the computations, particularly in situations
in which the data have various orders of magnitude.
In the case of quadratic polynomial matrices, we recover in this way a
scaling due to Fan, Lin, and Van Dooren, which
coincides with the tropical scaling when the two tropical roots are equal.
If not, the eigenvalues generally split in two groups, and the tropical
method leads to making one specific scaling for each
of the groups.}

\abstract{The eigenvalues of a matrix polynomial can be determined classically
by solving a generalized eigenproblem for a linearized matrix pencil,
for instance by writing the matrix polynomial in companion form.
We introduce a general scaling technique, based on tropical algebra, 
which applies in particular to this companion form. This scaling,
which is inspired by an earlier work of Akian, Bapat, and Gaubert,
relies on the computation of ``tropical roots''. We give explicit
bounds, in a typical case, indicating that these roots provide accurate estimates of the order
of magnitude of the different eigenvalues, and we show by experiments
that this scaling improves the accuracy (measured by
normwise backward error) of the computations, particularly in situations
in which the data have various orders of magnitude.
In the case of quadratic polynomial matrices, we recover in this way a
scaling due to Fan, Lin, and Van Dooren, which
coincides with the tropical scaling when the two tropical roots are equal.
If not, the eigenvalues generally split in two groups, and the tropical
method leads to making one specific scaling for each
of the groups.}

\section{Introduction}
\label{sec:1}
A classical problem is to compute the eigenvalues of a matrix polynomial
\[
P(\lambda)=A_{0}+A_{1}\lambda+\cdots +A_{d}\lambda^d
\]
where $A_{l}\in \C^{n\times n},l=0 \ldots d$ are given. The eigenvalues are
defined as the solutions of $\det(P(\lambda))=0$. If $\lambda$ is an eigenvalue, the
associated right and left eigenvectors $x$ and $y\in \C^n$ are the non-zero
solutions of the systems $P(\lambda)x=0$ and $y^*P(\lambda)=0$, respectively.
A common way to solve this problem, is to convert $P$ into a 
``linearized'' matrix pencil
\[
L(\lambda)=\lambda X+Y,\quad X,Y\in\C^{nd\times nd}
\]
with the same spectrum as $P$ and solve the eigenproblem for $L$,
by standard numerical algorithms like the QZ method~\cite{moler73}.
If $D$ and $D'$ are invertible diagonal matrices, and
if $\alpha$ is a non-zero scalar, we may consider
equivalently the scaled pencil $DL(\alpha\lambda)D'$.

The problem of finding the good linearizations and the good scalings
has received a considerable attention. 
The backward error and conditioning of the matrix pencil problem and 
of its linearizations have been investigated in particular
in works of Tisseur, Li, Higham, and Mackey,
 see~\cite{tisseur00,Higham061,Higham062}.

A scaling on the eigenvalue parameter to improve the normwise 
backward error of a quadratic polynomial matrix was proposed by
Fan, Lin, and Van Dooren~\cite{vandooren00}. This scaling
only relies on the norms $\gamma_l:=\|A_l\|$, $l=0,1,2$.
In this paper, we introduce a new family of scalings which also
rely on these norms. The degree $d$ is now arbitrary.

These scalings originate from the work of Akian, Bapat, and Gaubert~\cite{abg04b,abg04}, in which the entries of the matrices $A_l$ are functions, for instance
Puiseux series, of a (perturbation) parameter $t$. The valuations (leading exponents) of the Puiseux series representing the different eigenvalues
were shown to coincide, under some genericity conditions,
with the points of non-differentiability
of the value function of a parametric optimal assignment problem
(the tropical eigenvalues), a result which can be interpreted in
terms of amoebas~\cite{itenberg}. Indeed, the definition of the tropical
eigenvalues in~\cite{abg04b,abg04} makes sense in any field
with valuation. In particular, when the coefficients
belong to $\C$, we can take the map
$z\mapsto \log |z|$ from $\C$ to $\R\cup\{-\infty\}$
as the valuation. Then, the tropical eigenvalues are
expected to give, again under some non degeneracy conditions, the
correct order of magnitude of the different eigenvalues. 

The tropical
roots used in the present paper are an approximation
of the tropical eigenvalues, relying only on the norms
$\gamma_l=\|A_l\|$. A better scaling may be achieved
by considering the tropical eigenvalues, but computing these
eigenvalues requires $O(nd)$ calls to an optimal assignment algorithm,
whereas the tropical roots considered here can be computed in $O(d)$
time, see Remark~\ref{rk-eig} below for more information.
We examine such extensions in a further work.

As an illustration, consider the following quadratic polynomial matrix
\[
P(\lambda)=\lambda^210^{-18} \begin{pmatrix}1&2\\3&4\end{pmatrix}+\lambda\begin{pmatrix}-3&10\\16&45\end{pmatrix}+10^{-18}\begin{pmatrix}12&15\\34&28\end{pmatrix}
\]

By applying the QZ algorithm
on the first companion form of $P(\lambda)$ we get
the eigenvalues  -Inf,- 7.731e-19 , Inf, 3.588e-19,
by using the scaling proposed in ~\cite{vandooren00} we get 
-Inf, -3.250e-19, Inf, 3.588e-19.
However by using the tropical scaling we can find 
the four eigenvalues properly:
- 7.250e-18 $\pm$  9.744e-18i, - 2.102e+17 $\pm$ 7.387e+17i.
The result was shown to be correct (actually, up to a 14 digits precision)
with PARI, in which an arbitrarily large precision can be set. 
The above computations were performed in Matlab (version 7.3.0).

The paper is organized as follows.
In Section~\ref{sec:2}, we recall some classical facts of max-plus or tropical algebra,
and show that the tropical roots of a 
tropical polynomial can be computed in linear time, 
using a convex hull algorithm. 
Section~\ref{sec:3} states preliminary results concerning matrix pencils, linearization and normwise backward error.
 
In Section~\ref{sec:4}, we describe our scaling method.
In Section 5, we give a theorem locating the eigenvalues
of a quadratic polynomial matrix, which
provides some theoretical justification of the method. 
Finally in Section~6, we present the experimental results
showing that the tropical scaling can highly reduce the normwise backward error of an eigenpair. We consider the quadratic case in Section~6.1 and the general case in Section~6.2. For the quadratic case, we compare our results with the scaling proposed in ~\cite{vandooren00}.

\section{Tropical polynomials}
\label{sec:2}
The max-plus semiring $\rmax$, is the set $ \R\cup\{-\infty\} $, equipped with max as addition, and the usual addition as multiplication. 
It is traditional to use the notation $\oplus$ for  $\max$
(so $2 \oplus 3 = 3$), and $\otimes$ for $+$ (so $1\otimes 1 = 2$).
We denote by $\zero$ the zero element of the semiring, which is such that $\zero\oplus a = a$, here $\zero=-\infty$, and by $\unit$ the unit element of the semiring, which is such that $\unit\otimes a = a \otimes \unit = a$, here $\unit = 0$.
We refer the reader to~\cite{BCOQ92,KM,AG} for more background.

A variant of this semiring is 
the max-times semiring $\rmaxm$, 
which is the set of nonnegative real numbers $\R^+$, equipped with max as addition, and $\times$ as
multiplication. This semiring is isomorphic to $\rmax$ by the map
$x\mapsto \log x$. So, every notion
defined over $\rmax$ has an $\rmaxm$ analogue that we shall not
redefine explicitly. In the sequel, the word ``tropical'' will
refer indifferently to any of these algebraic structures.

Consider a max-plus (formal)
polynomial of degree $n$ in one variable, i.e., a formal
expression $P=\bigoplus_{0\leq k\leq n}P_kX^k$ in which the coefficients $P_k$
belong to $\rmax$, and the
associated numerical polynomial, which, with the notation
of the classical algebra, can be written as $p(x)=\max_{0\leq k\leq n}P_k+kx$.
Cuninghame-Green and Meijer showed~\cite{cuning80} that the analogue of the fundamental theorem of algebra holds in the max-plus setting, i.e.,
that $p(x)$ can be written uniquely as 
$p(x)=P_{n}+\sum_{1\leq k\leq n} \max(x,c_k)$, 
where $c_1,\ldots,c_n\in\rmax$ are the \NEW{roots}, i.e., the points at which the maximum attained at least twice. This is a special case of more general notions
which have arisen recently
in tropical geometry~\cite{itenberg}.
The \NEW{multiplicity} of the root $c$ is the cardinality of the set 
$\set{k\in \{1,\ldots, n\}}{c_{k} = c}$. Define the {\em Newton polygon}
$\Delta(P)$ 
of $P$ to be the upper boundary of the convex hull of the set of points
$(k,P_k)$, $k=0,\ldots,n$. This boundary consists of a
number of linear segments.
An application of Legendre-Fenchel duality
(see~\cite[Proposition 2.10]{abg04b}) shows that the 
opposite of the slopes of these segments are precisely the tropical
roots, and that the multiplicity of a root coincides with the horizontal
width of the corresponding segment. (Actually, min-plus polynomials
are considered in~\cite{abg04b}, but the max-plus case reduces
to the min-plus case by an obvious change of variable).
Since the Graham scan algorithm~\cite{Graham72} allows us to compute the convex hull of a finite set of points by making $O(n)$ arithmetical operations and comparisons, provided that the given set of points is already sorted by abscissa, we get the following result.
\begin{proposition}
The roots of a max-plus polynomial in one variable can be computed
in linear time.
\qed
\label{prp:troprootcomp}
\end{proposition}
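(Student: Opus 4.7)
The proof is essentially assembled from pieces already in place in the excerpt, so my plan is to put them together cleanly. The strategy is: (i) reduce the computation of tropical roots to the computation of the upper convex hull of a set of integer-abscissa points; (ii) exploit the fact that these points arrive sorted by abscissa to get linear time via Graham scan.

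More precisely, by the Legendre--Fenchel duality result quoted from~\cite[Proposition 2.10]{abg04b}, the tropical roots of $P=\bigoplus_{0\le k\le n} P_k X^k$ are exactly the opposites of the slopes of the linear segments forming the Newton polygon $\Delta(P)$, i.e.\ the upper boundary of the convex hull of the $n+1$ points $(k,P_k)$, $k=0,\ldots,n$, with multiplicity equal to the horizontal length of the corresponding segment. So the algorithm reduces to: compute this upper hull, then walk along its edges outputting for each edge of horizontal width $m$ the slope $-s$ with multiplicity $m$.

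The crucial quantitative point is that the input points already come sorted by abscissa, since the abscissas are $0,1,\ldots,n$. Under this presorting hypothesis, the Graham scan~\cite{Graham72} produces the convex hull using $O(n)$ arithmetic operations and comparisons (one can equivalently invoke Andrew's monotone-chain variant; only the upper hull is needed here, so the scan is in fact one-sided). Reading off the slopes and widths after the scan is a single additional linear pass, so the total cost remains $O(n)$.

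The only mild subtlety to address is the possible presence of coefficients $P_k=\zero=-\infty$, which represent ``missing'' monomials. I would handle these in a preliminary linear sweep: discard every index $k$ with $P_k=\zero$, since such a point lies at height $-\infty$ and therefore cannot contribute to the upper hull (except in the fully degenerate case $P\equiv\zero$, which has no roots). The Graham scan is then applied to the surviving points, whose number is at most $n+1$, preserving the $O(n)$ bound. This is really the only bookkeeping step; there is no genuine obstacle, as the nontrivial geometric content (that slopes of $\Delta(P)$ are the tropical roots) is imported from~\cite{abg04b}.
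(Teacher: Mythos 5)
Your proof is correct and follows essentially the same route as the paper: the result is obtained by combining the Legendre--Fenchel duality characterization of the tropical roots as the opposite slopes of the Newton polygon with the observation that the Graham scan runs in linear time on points presorted by abscissa. The only point worth a second look is your treatment of coefficients equal to $\zero$: discarding them is fine for the hull computation, but if the lowest-degree nonzero coefficient occurs at index $k_0>0$, you must still report $\zero$ as a root of multiplicity $k_0$ to account for all $n$ roots in the Cuninghame-Green--Meijer factorization.
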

The case of a max-times polynomial reduces
to the max-plus case by replacing every coefficient by its logarithm.
The exponentials of the roots of the transformed polynomial are
the roots of the original polynomial.

\section{Matrix pencil and normwise backward error}
\label{sec:3} 
Let us come back to the eigenvalue problem for the matrix pencil 
$P(\lambda)=A_{0}+A_{1}\lambda+\cdots +A_{d}\lambda^d$.\ 
There are many ways to construct a ``linearized'' matrix pencil
$L(\lambda)=\lambda X+Y,\quad X,Y\in\C^{nd\times nd}$ with the same
spectrum as $P(\lambda)$, see~\cite{Mackey06} for a general discussion.
In particular, the first 
companion form $\lambda X_1 + Y_1$
is defined by
\[
X_1 
= \diag(A_k,I_{(k-1)n}),
\qquad 
Y_1=
\begin{pmatrix}
  A_{k-1}&A_{k-2}&\ldots&A_0\\
    -I_n & 0 &\ldots &0 \\
   \vdots & \vdots &\vdots & \ddots \\
  0 & \ldots & -I_n & 0 
\end{pmatrix} \enspace .
\]

In the experimental part of this work, we are using this linearization.

To estimate the accuracy of a numerical algorithm computing an eigenpair,
we shall consider, as in~\cite{tisseur00}, the normwise backward error.
The latter arises when considering a perturbation
\[
\Delta P= \Delta A_{0}+\Delta A_{1}\lambda+\cdots +\Delta A_{d}\lambda^d \enspace. 
\]
The backward error of an approximate eigenpair
 $(\tilde{x},\tilde{\lambda})$ of $P$ 
is defined by
\[
\eta(\tilde{x},\tilde{\lambda})=\min\{\epsilon:(P(\tilde{\lambda})+\Delta P(\tilde{\lambda}))\tilde{x}=0,\|\Delta A_{l}\|_2\leq\epsilon\|E_l\|_2,l=0,\ldots m\} \enspace .
\]
The matrices $E_l$ representing tolerances.
The following computable expression for $\eta(\tilde{x},\tilde{\lambda})$ is given in the same reference,
\[
\eta(\tilde{x},\tilde{\lambda})=\frac{\|r\|_{2}}{\tilde{\alpha}\|\tilde{x}\|_{2}}
\]
where $r=P(\tilde{\lambda})\tilde{x}$ and $\tilde{\alpha}=\sum{|\tilde{\lambda}|^l\|E_l\|_2}$. In the sequel, we shall take $E_l=A_l$.

Our aim is to reduce the normwise backward error, by
a scaling of the eigenvalue $\lambda=\alpha\mu$,
where $\alpha$ is the scaling parameter.
This kind of scaling for quadratic polynomial matrix was proposed by Fan, Lin and Van Dooren~\cite{vandooren00}. We next introduce a new scaling,
based on the tropical roots.

\section{Construction of the tropical scaling}
\label{sec:4} 
Consider the matrix pencil modified by the substitution $\lambda=\alpha\mu$
\[
\tilde{P}(\mu)=\tilde{A}_{0}+\tilde{A}_{1}\mu+\cdots +\tilde{A}_{d}\mu^d
\]
where $\tilde{A}_{i}=\beta\alpha^i{A}_{i}$.

The tropical scaling which we next introduce is characterized by the property
that $\alpha$ and $\beta$ are such that $\tilde{P}(\mu)$ has at least two matrices $\tilde{A}_{i}$ with an (induced) Euclidean norm equal to one, whereas the Euclidean norm of the other matrices are all bounded by one.
This scaling is inspired by the work of 
M. Akian and R. Bapat and S. Gaubert~\cite{abg04}, which concerns the perturbation of the eigenvalues of a matrix pencil. The theorem on the location of the eigenvalues which is stated in the next section provides some justification for the
present scaling.

We associate to the original pencil the max-times
polynomial
\[
\trop p(x)=\max(\gamma_{0},\gamma_{1} \lambda,\cdots,\gamma_{d} \lambda^d) \enspace,
\]
where 
\[ \gamma_{i}:=\|A_{i}\|
\]
(the symbol $\trop$ stands for ``tropical'').
Let $\alpha_{1}\leq \alpha_{2}\leq \ldots\leq\alpha_{d}$ be the tropical roots of $\trop p(x)$ counted with multiplicities. 
For each $\alpha_{i}$, the maximum is attained by at least two mononomials.
Subsequently, 
the transformed polynomial $q(x):=\beta_i \trop p(\alpha_i x)$,
with $\beta_i:=(\trop p(\alpha_{i}))^{-1}$ has two coefficients
of modulus one, and all the other coefficients have modulus 
less than or equal to one. 
Thus $\alpha=\alpha_{i}$ and $\beta=\beta_i$ will satisfy the goal. 

The idea is to apply this scaling for all the tropical roots of $\trop p(x)$ and each time, to compute $n$ out of $nd$ eigenvalues of the corresponding scaled matrix pencil, because replacing $P(\lambda)$ by $P(\alpha_i\mu)$ is expected
to decrease the backward error for the eigenvalues of order $\alpha_i$, while 
possibly increasing the backward error for the other ones.

More precisely, let $\alpha_{1}\leq \alpha_{1}\leq \ldots\leq\alpha_{d}$ denote the tropical roots of $\trop p(x)$.
Also let 
\[
\underbrace{\mu_{1},\ldots,\mu_n},\underbrace{\mu_{n+1},\ldots,\mu_{2n}},\ldots,\underbrace{\mu_{(d-1)n+1},\ldots,\mu_{nd}}
\]
be the eigenvalues of $\tilde{P}(\mu)$ sorted by increasing modulus, computed by setting $\alpha=\alpha_{i}$ and $\beta=\trop p(\alpha_{i})^{-1}$ and partitioned in $d$ different groups.
Now, we choose the $i$th group of $n$ eigenvalues, multiply by $\alpha_{i}$ and put in the list of computed eigenvalues. By applying this iteration for all $i=1\ldots d$, we will get the list of the eigenvalues of $P(\lambda)$. Taking into account this description, we arrive at Algorithm~1. It should be understood here
that in the sequence $\mu_1,\ldots,\mu_{nd}$ of eigenvalues above, only the eigenvalues of order $\alpha_i$ are hoped to be computed accurately. Indeed,
in some extreme cases in which the tropical roots have very different orders of magnitude (as in the example shown in the introduction), the eigenvalues of
order $\alpha_i$ turn out to be accurate whereas the groups of higher orders have some eigenvalues Inf or Nan. So, Algorithm~1 merges into
a single picture several snapshots of the spectrum, each of them being
accurate on a different part of the spectrum.

\begin{table}
\begin{tabular}{lll}
\hline\noalign{\smallskip}
\multicolumn{3}{l}{\textbf{Algorithm 1} Computing the eigenvalues using the tropical scaling}\\
\noalign{\smallskip}\hline\noalign{\smallskip}
\multicolumn{3}{l}{\textit{INPUT}: Matrix pencil $P(\lambda)$}\\  		
\multicolumn{3}{l}{\textit{OUTPUT}: List of eigenvalues of $P(\lambda)$}\\
1. & \multicolumn{2}{l}{Compute the corresponding tropical polynomial $\trop p(x)$}\\
2. & \multicolumn{2}{l}{Find the tropical roots of $\trop p(x)$}\\
3. & \multicolumn{2}{l}{For each tropical root such as $\alpha_{i}$ do}\\
& 3.1 & Compute the tropical scaling based on $\alpha_{i}$\\
& 3.2 & Compute the eigenvalues using the QZ algorithm\\
& & and sort them by increasing modulus\\
& 3.3 & Choose the $i$th group of the eigenvalues\\  	
\noalign{\smallskip}\hline
\end{tabular}
\end{table}

To illustrate the algorithm, let $P(\lambda)=A_{0}+A_{1}\lambda+A_{2}\lambda^2$ be a quadratic polynomial matrix and let 
$\trop p(\lambda)=\max(\gamma_0,\gamma_1\lambda,\gamma_2\lambda^2)$ be the tropical polynomial corresponding to this quadratic polynomial matrix.

We refer to the tropical roots of $\trop p(x)$ by $\alpha^+\geq\alpha^-$. If $\alpha^+=\alpha^-$ which happens when $\gamma_{1}^2\leq \gamma_{0}\gamma_{2}$ then, $\alpha=\sqrt{\frac{
\gamma_{0}}{\gamma_{2}}}$ and $\beta=\trop p(\alpha)^{-1}=\gamma_{0}^{-1}$. This case coincides with the scaling of~\cite{vandooren00} in which $
\alpha^*=\sqrt{\frac{\gamma_0}{\gamma_2}}$.

When $\alpha^+\neq\alpha^-$, we will have two different scalings based on $\alpha^+=\frac{\gamma_1}{\gamma_2}$, $\alpha^-=\frac{\gamma_0}{\gamma_1}$ and two different $\beta$ corresponding to the two tropical roots:
\[
\beta^+=\trop p(\alpha^+)^{-1}=\frac{\gamma_{2}}{\gamma_{1}^{2}},
\qquad \beta^-=\trop p(\alpha^-)^{-1}=\frac{1}{\gamma_{0}}.
\]
To compute the eigenvalues of $P(\lambda)$ by using the first companion form linearization, we apply the scaling based on $\alpha^+$, which yields 
\[
\lambda
\begin{pmatrix}
  \frac{1}{\gamma_2}A_2 &  \\
  &I  
\end{pmatrix}
+
\begin{pmatrix}
  \frac{1}{\gamma_1}A_1 & \frac{\gamma_2}{\gamma_1^{2}}A_0 \\
  -I&0  
\end{pmatrix} \enspace ,
\]
to compute the $n$ biggest eigenvalues. 
We apply the scaling based on $\alpha^-$, which yields
\[
\lambda
\begin{pmatrix}
  \frac{\gamma_0}{\gamma_1^2}A_2 &  \\
  &I  
\end{pmatrix}
+
\begin{pmatrix}
  \frac{1}{\gamma_1}A_1 & \frac{1}{\gamma_2}A_0 \\
  -I&0  
\end{pmatrix} \enspace,
\]
to compute the $n$ smallest eigenvalues.

In general, let $\alpha_{1}\leq \alpha_{1}\leq \ldots\leq\alpha_{d}$ be the tropical roots of $\trop p(x)$ counted with multiplicities. 
To compute the $i$th biggest group of eigenvalues, 
we perform the scaling for $\alpha_{i}$, which yields
the following linearization:

\[
\lambda
\begin{pmatrix}
  \beta\alpha_{i}^{d}A_{d} &\\
    & I\\
   & &\ddots \\
  & & &I\\
  & & & &I
\end{pmatrix}
+
\begin{pmatrix}
  \beta\alpha_{i}^{d-1} A_{d-1}&\ldots&\beta\alpha_{i}A_{1}&\beta A_{0}\\
  -I & 0 & \ldots & 0\\
  0 & -I & \ddots & \vdots\\
  \vdots &  &\ddots & 0 \\
  0& \ldots &-I&0
\end{pmatrix}
\]
where $\beta=\trop p(\alpha_i)^{-1}$. 
Doing the same for all the distinct tropical roots, we can compute all the eigenvalues.

\begin{remark}
The interest of Algorithm~1 lies in the accuracy
(since it allows us to solve instances in which the
data have various order of magnitudes).
Its inconvenient
is to call several times
(once for each distinct tropical eigenvalue, and so,
at most $d$ times) the QZ algorithm.
However, we may partition the different tropical
eigenvalues in groups consisting each of eigenvalues of the same order of
magnitude, and then, the speed factor we would loose would be reduced to the number of different groups.
\end{remark}

\section{Splitting of the eigenvalues in tropical groups}
\label{sec:5}
In this section we state a simple theorem concerning the location of the
eigenvalues of a quadratic polynomial matrix, showing that under
a non degeneracy condition, the two tropical roots do provide
the correct estimate of the modulus of the eigenvalues.

We shall need to compare spectra, which may be thought of as
unordered sets, therefore, we define the following
metric (eigenvalue variation), which appeared
in~\cite{Gal08}. We shall use the notation $\spec$
for the spectrum of a matrix or a pencil.
\begin{definition}
\label{eigenvalue variation}
Let $\lambda_1,\ldots \lambda_n $ and $\mu_1\ldots\mu_n $
denote two sequences of complex numbers. The variation between $\lambda$ and
$\mu$ is defined by
\[
v(\lambda,\mu):=\min_{\pi\in S_n}\{\max_{i} |\mu_{\pi(i)}-\lambda_i |\}\enspace,
\]
where $S_n$ is the set of permutations of $\{1,2,\ldots,n \}$.
If $A,B\in\mathbb{C}^{n\times n}$, the eigenvalue variation of $A$ and $B$ is defined by $v(A,B):=v(\spec A,\spec B)$.
\end{definition}
Recall that the quantity $v(\lambda,\mu)$ can be computed
in polynomial time by solving a bottleneck assignment problem.

We shall need the following theorem of Bathia, Elsner, and Krause~\cite{bhatia}.
\begin{theorem}[{\cite{bhatia}}]
\label{perturbation_theorem}
Let $A,B\in\mathbb{C}^{n\times n}$. Then $ v(A,B)\leq 4\times 2^{-1/n}(\|A\|+\|B\|)^{1-1/n}\|A-B\|^{1/n} $ .
\end{theorem}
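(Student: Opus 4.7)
The plan is to follow the classical approach of Elsner: derive first a pointwise bound (for each eigenvalue of $A$, some eigenvalue of $B$ is nearby), and then upgrade it to a bijection bound by a matching argument. The analytic input is purely algebraic and is based on the resolvent identity $(\lambda I - B)^{-1} = \operatorname{adj}(\lambda I - B)/\det(\lambda I - B)$.

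First I would fix an eigenvalue $\lambda \in \spec A$ with unit eigenvector $x$, and observe that $(B-\lambda I)x = (B-A)x$, whence $\|(B-\lambda I)x\|_2 \leq \|A-B\|$; assuming $\lambda \notin \spec B$, this yields $\|(\lambda I - B)^{-1}\| \geq 1/\|A-B\|$. On the other hand, the entries of $\operatorname{adj}(\lambda I - B)$ are polynomials of degree at most $n-1$ in the entries of $B$, so an elementary estimate gives $\|\operatorname{adj}(\lambda I - B)\| \leq c_n(|\lambda|+\|B\|)^{n-1}$ for some explicit dimensional constant, while $|\det(\lambda I - B)| = \prod_j |\lambda - \mu_j|$. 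Combining these bounds and using $|\lambda| \leq \|A\|$, one reaches $\prod_j |\lambda - \mu_j| \leq c_n \|A-B\|(\|A\|+\|B\|)^{n-1}$. Taking $n$-th roots gives the pointwise estimate $\min_j |\lambda - \mu_j| \leq c_n^{1/n}(\|A\|+\|B\|)^{1-1/n}\|A-B\|^{1/n}$, which in particular shows that $\spec A$ is covered by small disks around $\spec B$.

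The main obstacle is then to promote this ``min-bound'' into the ``permutation bound'' defining $v(A,B)$: one must exhibit a bijection realizing the claimed estimate simultaneously for all eigenvalues. I would carry this out via a Hall-type matching argument on the bipartite graph joining $\lambda_i$ to $\mu_j$ whenever $|\lambda_i - \mu_j| \leq r$, and then optimize $r$ by balancing the defect condition of Hall's theorem against the pointwise product bound applied to subsets of eigenvalues (a Polya-like inequality on products of distances). Equivalently, one may proceed by induction on $n$, selecting the closest pair, deflating both matrices by a unitary reduction, and tracking the effect on the norms $\|A\|,\|B\|$ and on $\|A-B\|$ in the $(n-1)\times(n-1)$ principal submatrix. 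The dimensional constants produced by this combinatorial step account for the factor $4 \times 2^{-1/n}$ appearing in the statement, while the analytic derivation above is routine in comparison and already delivers the essential scaling $(\|A\|+\|B\|)^{1-1/n}\|A-B\|^{1/n}$.
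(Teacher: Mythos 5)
First, note that the paper does not prove this statement: it is quoted verbatim from Bhatia, Elsner, and Krause with a citation, and is used as a black box in the proof of Theorem~2. So there is no in-paper argument to compare yours against; your proposal has to stand on its own.

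Your first step is essentially sound and is Elsner's classical pointwise bound. (It can even be done with constant $c_n=1$: either via Hadamard's inequality applied to $\lambda I-B$ in an orthonormal basis starting with the eigenvector $x$, or by noting that $\|\operatorname{adj}M\|$ equals the product of the $n-1$ largest singular values of $M$, hence is at most $\|M\|^{n-1}$.) This gives $\min_j|\lambda-\mu_j|\le(\|A\|+\|B\|)^{1-1/n}\|A-B\|^{1/n}$ for every $\lambda\in\spec A$, i.e.\ a one-sided Hausdorff-type estimate.

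The genuine gap is the second step, which is exactly the content of the Bhatia--Elsner--Krause theorem and which you only gesture at. A Hall-type matching on the graph $\{(i,j):|\lambda_i-\mu_j|\le r\}$ with $r$ equal to the pointwise bound does \emph{not} work: the optimal matching distance $v(A,B)$ can genuinely exceed the two-sided Hausdorff distance between the spectra by a factor growing with $n$ (this is why the classical Ostrowski/Elsner route loses a factor like $2n-1$), so Hall's condition fails in general at that radius, and it is not clear how your ``defect balancing'' would recover the specific constant $4\times2^{-1/n}$. The deflation alternative also breaks down for non-normal matrices: the unitary reductions putting $\lambda_1$ and $\mu_{j_1}$ into the corner of $A$ and $B$ are \emph{different} unitaries, so the norm of the difference of the deflated $(n-1)\times(n-1)$ blocks is no longer controlled by $\|A-B\|$. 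The actual proof in the cited reference works at the level of the characteristic polynomials $f=\chi_A$, $g=\chi_B$: one follows the roots of the homotopy $f_t=(1-t)f+tg$, observes that any root $z$ of $f_t$ satisfies $|f(z)g(z)|\le|f(z)-g(z)|^2$, uses this lemniscate estimate together with the coefficient bounds coming from your step~1 to bound the diameter of each connected component of the union of the root sets, and notes that each component contains equally many roots of $f$ and of $g$ by the argument principle --- which yields the matching and produces the constant $4\times2^{-1/n}$. Without some version of this continuity/component argument, your outline does not yield the stated bound.
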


The following result shows that when the parameter $\delta$ measuring
the separation between the two tropical roots is sufficiently large, and
when the matrices $A_2,A_1$ are well conditioned, then, there are
precisely $n$ eigenvalues of the order of the maximal tropical root. 
By applying the same result to the reciprocal
pencil, we deduce, under the same separation condition,
that when $A_1,A_0$ are well conditioned, there are precisely $n$ eigenvalues
of the order of the minimal tropical root. So, under such conditions, 
the tropical
roots provide accurate a priori estimates of the order
of the eigenvalues of the pencil.

\begin{theorem}[Tropical splitting of eigenvalues]
\label{theorem_location}
Let $P(\lambda)=\lambda^2 A_2+ \lambda A_1+A_0$ where $A_i\in\mathbb{C}^{n\times n}$, and $\gamma_i:=\|A_i\|$, $i=0,1,2$.
Assume that the max-times polynomial
$p(\lambda)=\max(\lambda^2\gamma_2,\lambda\gamma_1,\gamma_0)$
has two distinct tropical roots, $\alpha^+:=\gamma_1/\gamma_2$
and $\alpha^-=\gamma_0/\gamma_1$, and let $\delta:=\alpha^+/\alpha^-$.
Assume that $A_2$ is invertible. Let $\xi_1,\ldots,\xi_n$ denote the
eigenvalues of the pencil $\lambda A_2+A_1$, and let us set
$\xi_{n+1}=\cdots =\xi_{2n}=0$. Then,
\[
v(\spec P,\xi) \leq \frac{C \alpha^+}{\delta^{1/2n}} \enspace ,
\]
where
\[
C:=4\times 2^{-1/2n} \big(2+2\cond A_2+\frac{\cond A_2}{\delta}\big)^{1-1/2n}\big(\cond A_2\big)^{1/2n} \enspace ,
\]
and
\begin{align}
\alpha^+(\cond A_1)^{-1}\leq |\xi_i|\leq \alpha^+\cond A_2 ,\qquad 1\leq i\leq n \enspace .\label{ineq-simple}
\end{align}
\end{theorem}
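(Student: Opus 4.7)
The plan is to linearize $P(\lambda)$ by its first companion pencil, apply the tropical scaling $\lambda = \alpha^+\mu$ from Section~\ref{sec:4}, and compare the resulting $2n\times 2n$ companion matrix with a ``decoupled'' one whose spectrum is exactly $\{\xi_1/\alpha^+,\ldots,\xi_n/\alpha^+,0,\ldots,0\}$. Once the two matrices are close in norm, Theorem~\ref{perturbation_theorem} yields the stated bound, which is then rescaled back by the factor $\alpha^+$.

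After the substitution, the coefficients of the scaled polynomial $\tilde P(\mu)$ are $\tilde A_2 = A_2/\gamma_2$, $\tilde A_1 = A_1/\gamma_1$, $\tilde A_0 = (\gamma_2/\gamma_1^2)A_0$, so that $\|\tilde A_2\| = \|\tilde A_1\| = 1$ and $\|\tilde A_0\| = \gamma_0\gamma_2/\gamma_1^2 = 1/\delta$. Since $A_2$ is invertible, I can convert the first companion pencil of $\tilde P$ to the companion matrix
\[
M = \begin{pmatrix} -\gamma_2 A_2^{-1}\tilde A_1 & -\gamma_2 A_2^{-1}\tilde A_0 \\ I & 0 \end{pmatrix},
\]
whose spectrum coincides with $\spec \tilde P$. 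Setting the $(1,2)$ block to zero produces a block triangular matrix $M_0$; since $-\gamma_2 A_2^{-1}\tilde A_1 = -(\gamma_2/\gamma_1)A_2^{-1}A_1$ has eigenvalues $\xi_i/\alpha^+$, the spectrum of $M_0$ is precisely $\xi/\alpha^+$ (with $n$ extra zeros coming from the bottom block).

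Using $\|A_2^{-1}\| = \cond(A_2)/\gamma_2$ together with the elementary bound $\|M\|_2 \le \sum_{i,j}\|M_{ij}\|_2$ for the four blocks (each summand is a matrix with a single non-zero block, hence with $2$-norm equal to that of the block), I get $\|M-M_0\|\le \cond(A_2)/\delta$ and $\|M\|+\|M_0\| \le 2 + 2\cond(A_2) + \cond(A_2)/\delta$. Theorem~\ref{perturbation_theorem}, applied with ``$n$'' replaced by $2n$, then gives
\[
v(\spec M,\spec M_0) \le 4\cdot 2^{-1/2n}\bigl(2+2\cond A_2 + \cond A_2/\delta\bigr)^{1-1/2n}\bigl(\cond A_2/\delta\bigr)^{1/2n}.
\]
Because $\spec P = \alpha^+\spec \tilde P$, the variation metric scales by $\alpha^+$ as well, and this delivers exactly $v(\spec P,\xi) \le C\alpha^+/\delta^{1/2n}$.

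The bounds~(\ref{ineq-simple}) follow from $\xi_i \in \spec(-A_2^{-1}A_1)$: the upper one from $|\xi_i|\le\|A_2^{-1}A_1\|\le\|A_2^{-1}\|\gamma_1 = \cond(A_2)\alpha^+$, and the lower one (vacuous if $A_1$ is singular) from $|\xi_i|\ge 1/\|A_1^{-1}A_2\|\ge \gamma_1/(\|A_1^{-1}\|\gamma_2)=\alpha^+/\cond(A_1)$. I expect no conceptual obstacle; the main difficulty is bookkeeping, in particular making sure the additive ``$2$'' in $2+2\cond A_2 + \cond A_2/\delta$ is traced to the identity blocks of $M$ and $M_0$, and that the exponent is $1/2n$ rather than $1/n$ because the perturbation theorem is applied to the linearized $2n\times 2n$ matrices.
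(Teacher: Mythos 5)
Your proposal is correct and follows essentially the same route as the paper: the same scaling by $\alpha^+$ and $\beta^+=\gamma_2/\gamma_1^2$, the same companion matrix $M$ (the paper's $X$) compared against the same block-triangular matrix $M_0$ (the paper's $Y$), the same norm estimates, the same application of the Bhatia--Elsner--Krause bound to the $2n\times 2n$ matrices, and the same reciprocal-pencil argument for the lower bound in~\eqref{ineq-simple}. No gaps to report.
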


\begin{proof}
Let us make the scaling corresponding to the maximal tropical root
$\alpha^+=\gamma_1/\gamma_2$, with $\beta^+=\gamma_2/\gamma_1^2$, 
which amounts to considering 
the new polynomial matrix $Q(\mu)=\beta^+P(\alpha^+\mu)=
\bar A_2\mu^2+\bar A_1\mu+\bar A_0$ where
\[
\bar A_2= \gamma_2^{-1}A_2,\qquad \bar A_1=\gamma_1^{-1}A_1,
\qquad \bar A_0 = \frac{\gamma_2}{\gamma_1^2}A_0 \enspace .
\]
Since $A_2$ is invertible, $\lambda$ is an eigenvalue of the pencil
$P$ if and only if $\lambda=\alpha^+\mu$ where $\mu$ is an eigenvalue
of the matrix:
\[ X=
\begin{pmatrix} 
-\bar A_2^{-1}\bar A_1&-\bar A_2^{-1}\bar A_0\\
I&0
\end{pmatrix}
\]
Let $\mu_i,i=1,\ldots,2n$ denote the eigenvalues of this matrix.
Consider 
\[ Y=
\begin{pmatrix} 
-\bar A_2^{-1}\bar A_1&0\\
I&0
\end{pmatrix}
\]
Observe that $\|\bar A_1\|=1$ and
$\|\bar A_0\|= \gamma_2\gamma_0/\gamma_1^2=1/\delta$.
Since the induced Euclidean norm $\|\cdot\|$ is an algebra
norm, we get
\begin{align*}
\|X\|\leq \|I\|+\|\bar A_2^{-1}\bar A_1\|+\|\bar A_2^{-1}\bar A_0\|
& 
\leq 1 + \|A_2^{-1}\|\|A_2\|+ \|A_2^{-1}\|\|A_2\|\|\bar A_0\|\\
& = 1+\cond A_2(1+1/\delta) \enspace .
\end{align*}
Moreover,
\[ 
\|Y\|\leq 1+ \cond A_2 \enspace ,
\qquad 
\|X-Y\|=(\cond A_2)/\delta.
\]
Using Theorem~\ref{perturbation_theorem}, we deduce that
\[
v(\spec X,\spec Y)\leq C/\delta^{1/2n} \enspace .
\]
Since the family of eigenvalues of $P$ coincide with $\alpha^+(\spec X)$, and since the family of numbers $\xi_i$ coincides
with $\alpha^+(\spec Y)$, the first part of the result is proved.

If $\xi$ is an eigenvalue of $A_2\lambda +A_1$, then,
we can write $\xi=\alpha^+\zeta$, where $\zeta$ is an eigenvalue
of $\bar A_2 \mu +\bar A_1$. We deduce that $|\zeta|
\leq \|\bar A_2^{-1}\|\|\bar A_1\|=\cond A_2$, which establishes the second
inequality in~\eqref{ineq-simple}. The first inequality is established along the same
lines, by considering the reciprocal pencil of $\bar A_2 \mu +\bar A_1$.
\qed
\end{proof}
\begin{remark}
Theorem~\ref{theorem_location} is a typical, but special instance
of a general class of results that we discuss in a further work.  
In particular, this theorem can be extended to matrix polynomials
of an arbitrary degree, with a different proof technique. Indeed, the 
idea of the proof above works only for the two ``extreme'' groups of
eigenvalues, whereas in the degree $d$ case, the eigenvalues are split
in $d$ groups (still under nondegeneracy conditions). 
Note also that the exponent in $\delta^{1/2n}$ is suboptimal
\end{remark}
\begin{remark}\label{rk-eig}
In~\cite{abg04,abg04b}, the {\em tropical eigenvalues} are defined as follows. 
The {\em permanent} of a $n\times n$ matrix $B=(b_{ij})$ with entries in $\rmax$ is defined by
\[
\operatorname{per} B:=\max_{\sigma\in S_n} \sum_{1\leq i\leq n}b_{i\sigma(i)} \enspace .
\]
This is
nothing than the value of the optimal assignment problem with weights $(b_{ij})$. The {\em characteristic polynomial} of a matrix $C=(c_{ij})$ is defined 
as the map from $\rmax$ to itself,
\[
x\mapsto P_C(x):=\operatorname{per} (C \oplus xI) \enspace,
\]
where $I$ is the max-plus identity matrix, with diagonal entries
equal to $0$ and off-diagonal entries equal to $-\infty$. The sum
$C\oplus xI$ is interpreted in the max-plus sense, so
\[
(C\oplus xI)_{ij} = \begin{cases} c_{ij} & \text{if $i\neq j$}\\
\max(c_{ii},x) & \text{if $i=j$.}
\end{cases}
\]
The {\em tropical eigenvalues} are defined as the roots of the characteristic
polynomial. The previous definition has an obvious generalization
to the case of tropical matrix polynomials: if $C_0,\ldots,C_d$ are $n\times n$
matrices with entries in $\rmax$, the eigenvalues of the matrix polynomial $C(x):=C_0\oplus C_1x \oplus \cdots \oplus C_d x^d$ are defined
as the roots of the polynomial function $x\mapsto \operatorname{per} (C(x))$.
The roots of this function can be computed in polynomial
time by $O(nd)$ calls to an optimal assignment solver
(the case in which $C(x)=C_0\oplus xI$ was solved
by Burkard and Butkovi\v{c}~\cite{bb02};
the generalization to the degree $d$ case was pointed out
in~\cite{abg04}). When the matrices $A_0,\ldots,A_d$ are scalars, the logarithms of the
tropical roots considered in the present paper are readily seen to coincide
with the tropical eigenvalues of the pencil in which
$C_k$ is the logarithm of the modulus of $A_k$, for $0\leq k\leq d$.
When these matrices are not scalars,
in view of the asymptotic results of~\cite{abg04},
the exponentials of the tropical eigenvalues are expected to provide
more accurate estimates of the moduli of the complex roots.
This alternative approach is the object of a further work,
however, the comparative interest of the tropical roots considered here lies in their simplicity: they only
depend on the norms of $A_0,\ldots,A_d$, and can be computed in linear
time from these norms. They can also be used as a measure of ill-posedness
of the problem (when the tropical roots have different orders of magnitude,
the standard methods in general fail).
\end{remark}

\section{Experimental Results}
\label{sec:6}

\subsection{Quadratic Polynomial Matrices}
\label{sec:61}Consider first $P(\lambda)=A_{0}+A_{1}\lambda+A_{2}\lambda^2$ and its linearization $L=\lambda X + Y $. 
Let $z$ be the eigenvector computed by applying the QZ algorithm to this linearization. 
Both $\zeta_1=z(1:n)$ and $\zeta_2=z(n+1:2n)$ are eigenvectors of $P(\lambda)$. We present our results for both of these eigenvectors;
$\eta_s$ denotes the normwise backward error for the scaling of~\cite{vandooren00}, and $\eta_t$ denotes the same quantity for the tropical scaling.

Our first example coincides with Example~3 of~\cite{vandooren00} where $\|A_2\|_2\approx 5.54\times 10^{-5},\|A_1\|_2\approx 4.73\times10^3,\|A_0\|_2 \approx 6.01\times 10^{-3}$ and $A_i\in\mathbb{C}^{10\times 10}$.
We used $100$ randomly generated pencils normalized to get the mentioned norms and we computed the average of the quantities mentioned in the following table for these pencils. Here we present the results for the $5$ smallest eigenvalues, however for all the eigenvalues, the backward error computed by using the tropical scaling is of order $10^{-16}$ which is the precision of the computation. 
The computations were carried out in SCILAB 4.1.2.
 
\begin{center}
\begin{tabular}{l@{\extracolsep{1mm}}||l|l||l|l||l|l}
$|\lambda|$ & $\eta(\zeta_1,\lambda)$ & $\eta(\zeta_2,\lambda)$ & $\eta_s(\zeta_1,\lambda)$ & $\eta_s(\zeta_2,\lambda)$ & $\eta_t(\zeta_1,\lambda)$ & $\eta_t(\zeta_2,\lambda)$\\
\hline
2.98E-07  &    1.01E-06  & 4.13E-08  & 5.66E-09  & 5.27E-10  &   6.99E-16  & 1.90E-16  \\
5.18E-07  &  1.37E-07    & 3.84E-08  & 8.48E-10  & 4.59E-10  &   2.72E-16  & 1.83E-16  \\
7.38E-07  & 5.81E-08     & 2.92E-08  & 4.59E-10  & 3.91E-10  &   2.31E-16  & 1.71E-16  \\
9.53E-07  &  3.79E-08    & 2.31E-08  & 3.47E-10  & 3.36E-10  &   2.08E-16  & 1.63E-16  \\
1.24E-06  &  3.26E-08    & 2.64E-08  & 3.00E-10  & 3.23E-10  &   1.98E-16  & 1.74E-16  
\end{tabular}
\end{center}
In the second example, we consider a matrix pencil with $\|A_2\|_2\approx 10^{-6},\|A_1\|_2\approx 10^3 ,\|A_0\|_2\approx 10^5$ and $A_i\in\mathbb{C}^{40\times 40}$. Again, we use $100$ randomly generated pencils with the mentioned norms and we compute the average of all the quantities presented in the next table. 
We present the results for the $5$ smallest eigenvalues. 
This time, the computations shown are from MATLAB 7.3.0,
actually, the results are insensitive to this choice, since 
the versions of MATLAB and SCILAB we used both rely on 
the QZ algorithm of Lapack library (version 3.0).
\begin{center}
\begin{tabular}{l@{\extracolsep{1mm}}||l|l||l|l||l|l}
$|\lambda|$ & $\eta(\zeta_1,\lambda)$ & $\eta(\zeta_2,\lambda)$ & $\eta_s(\zeta_1,\lambda)$ & $\eta_s(\zeta_2,\lambda)$ & $\eta_T(\zeta_1,\lambda)$ & $\eta_T(\zeta_2,\lambda)$\\
\hline
1.08E+01 &	2.13E-13&	4.97E-15&	8.98E-12&	4.19E-13&	5.37E-15&	3.99E-16 \\
1.75E+01&	5.20E-14&	4.85E-15&	7.71E-13&	4.09E-13&	6.76E-16&	3.95E-16 \\
2.35E+01&	4.56E-14&	5.25E-15&	6.02E-13&	4.01E-13&	5.54E-16&	3.66E-16 \\
2.93E+01&	4.18E-14&	5.99E-15&	5.03E-13&	3.97E-13&	4.80E-16&	3.47E-16 \\
3.33E+01&	3.77E-14&	5.28E-15&	4.52E-13&	3.84E-13&	4.67E-16&	3.53E-16 
\end{tabular}
\end{center}

\subsection{Polynomial Matrices of Degree $d$}
\label{sec:62}
Consider now the polynomial matrix $P(\lambda)=A_{0}+A_{1}\lambda+\cdots+A_{d}\lambda^d$, and let $L=\lambda X+Y$ be the first companion form linearization of this pencil. If $z$ is an eigenvector for $L$ then $ \zeta_{1}= z(1:n) $ is an eigenvector for $P(\lambda)$. In the following computations, we use $\zeta_1$ to compute the normwise backward error of Matrix pencil, however this is possible to use any $z(kn+1:n(k+1))$ for $k=0\ldots d-1$. 

To illustrate our results, we apply the algorithm for $20$ different randomly generated matrix pencils and then compute the backward error for a specific eigenvalue of these matrix pencils. The 20 values x-axis, in Fig.~\ref{n20d5} and~\ref{n8d10}, identify the random instance while the y-axis shows the $\log_{10}$ of backward error for a specific eigenvalue. Also we sort the eigenvalues in a decreasing order of their absolute value, so $\lambda_1$ is the maximum eigenvalue.

We firstly consider the randomly generated matrix pencils of degree $5$ where the order of magnitude of the Euclidean norm of $A_i$ is as follows:  
\begin{center}
\begin{tabular}{l@{\extracolsep{1mm}}|l|l|l|l|l}
$ \|A_0 \| $ & $ \|A_1 \| $ & $ \|A_2 \| $ & $ \|A_3 \| $ & $ \|A_4 \| $ & $ \|A_5 \| $ 
\\\hline
$ O(10^{-3}) $ & $ O(10^{2})$ & $O(10^{2})$ & $O(10^{-1})$ & $O(10^{-4})$ & $O(10^{5})$ 
\end{tabular}
\end{center}
Fig.~\ref{n20d5} shows the results for this case where the dotted line shows the backward error without scaling and the solid line shows the backward error using the tropical scaling. 
We show the results for the minimum eigenvalue, the ``central'' $50^{\text{th}}$ eigenvalue and the maximum one from top to down. 
In particular, the picture at the top
shows a dramatic improvement since the smallest of the eigenvalues is
not computed accurately
(backward error almost of order one) without the scaling, whereas for
the biggest of the eigenvalues, the scaling typically improves the backward
error by a factor 10. For the central eigenvalue,
the improvement we get is intermediate.
The second example concerns the randomly generated matrix pencil with degree $10$ while the order of the norm of the coefficient matrices are as follows:
\begin{center}
\begin{tabular}{l@{\extracolsep{1mm}}|l|l|l|l|l}
$ \|A_0 \| $ & $ \|A_1 \| $ & $ \|A_2 \| $ & $ \|A_3 \| $ & $ \|A_4 \| $ & $ \|A_5 \| $\\
\hline
 $O(10^{-5})$ & $O(10^{-2})$ & $O(10^{-3})$ & $O(10^{-4})$ & $O(10^{2})$ & $O(1) $\end{tabular}
\begin{tabular}{l@{\extracolsep{1mm}}|l|l|l|l}
$ \|A_6 \| $ & $ \|A_7 \| $ & $ \|A_8 \| $ & $ \|A_9 \| $ & $ \|A_{10} \| $\\
\hline 
$O(10^{3})$ & $O(10^{-3})$ & $O(10^{4})$ & $O(10^{2})$ & $O(10^{5})$ 
\end{tabular}
\end{center}
In this example, the order of the norms differ from $10^{-5}$ to $10^5$ and the space dimension of $A_i$ is $8$. Figure ~\ref{n8d10} shows the results for this case where the dotted line shows the backward error without scaling and the solid line shows the backward error using tropical scaling. Again we show the results for the minimum eigenvalue, the $40$th eigenvalue and the maximum one from top to down.

\begin{figure}[htbp]
\begin{minipage}[l]{.49\linewidth}
\centering
\includegraphics[scale=0.3]{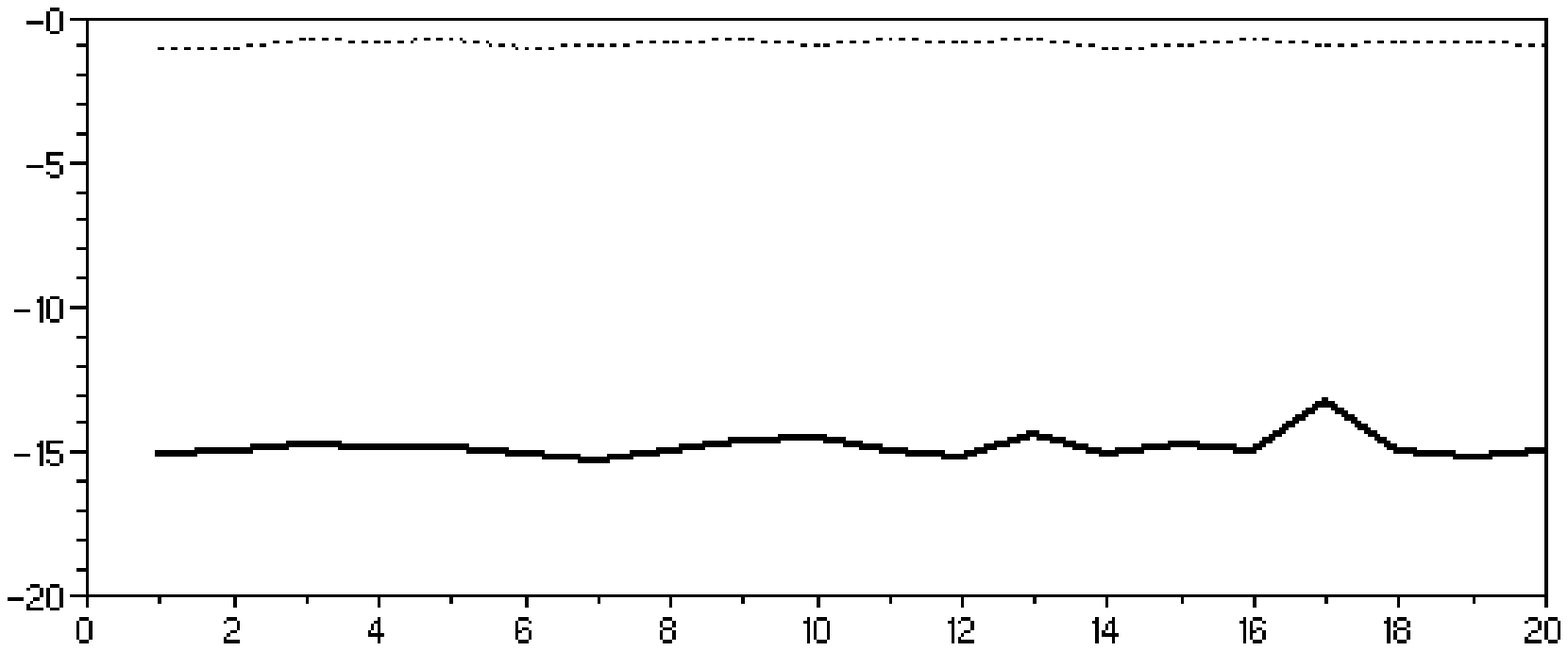} 
 \includegraphics[scale=0.3]{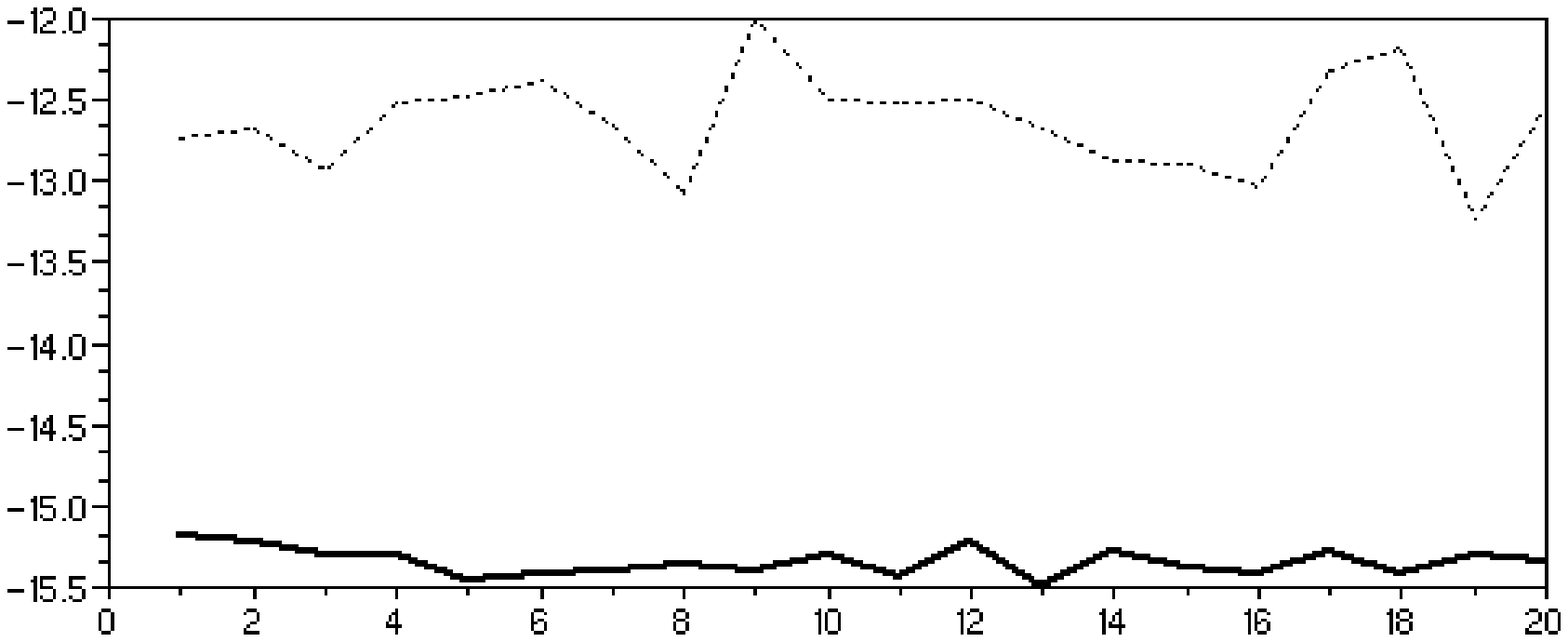}
 \includegraphics[scale=0.3]{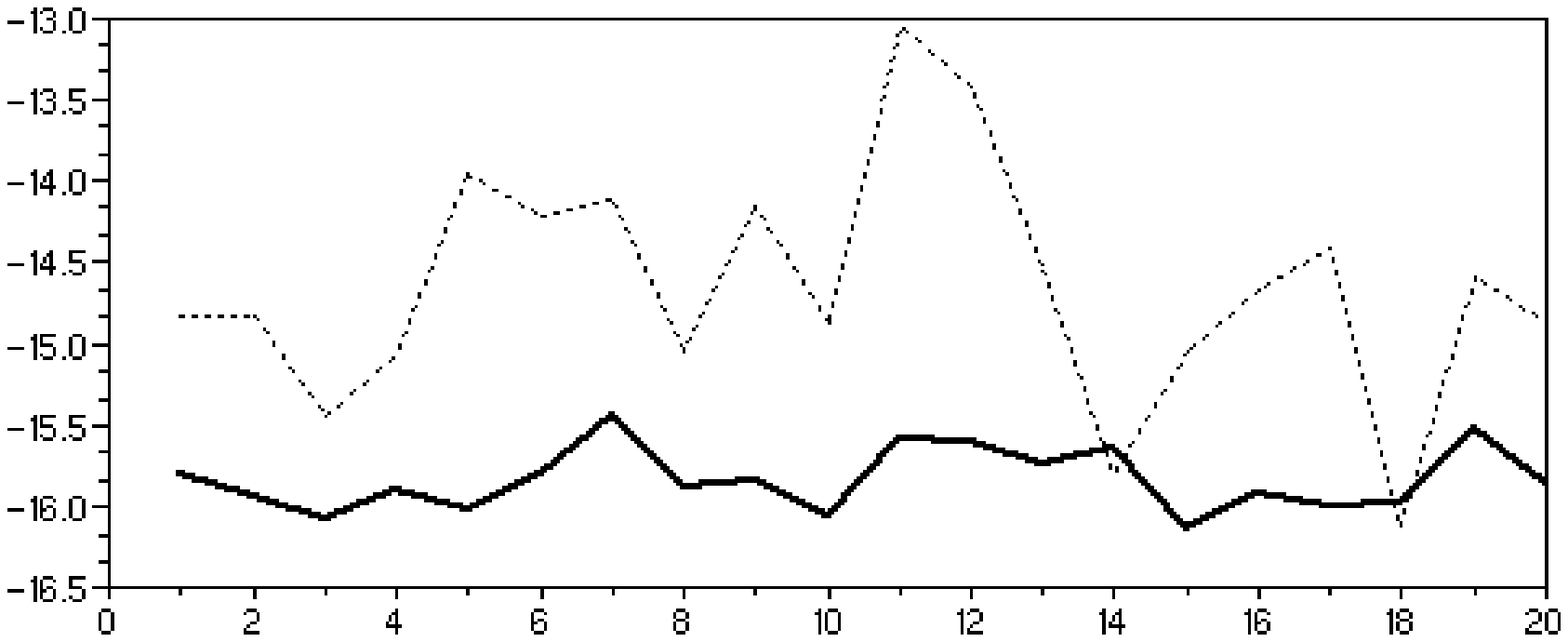}

 \caption{Backward error for randomly generated matrix pencils with $n=20$, $d=5$. }
 \label{n20d5}
\end{minipage} 
\begin{minipage}[r]{.45\linewidth}
\centering
\includegraphics[scale=0.3]{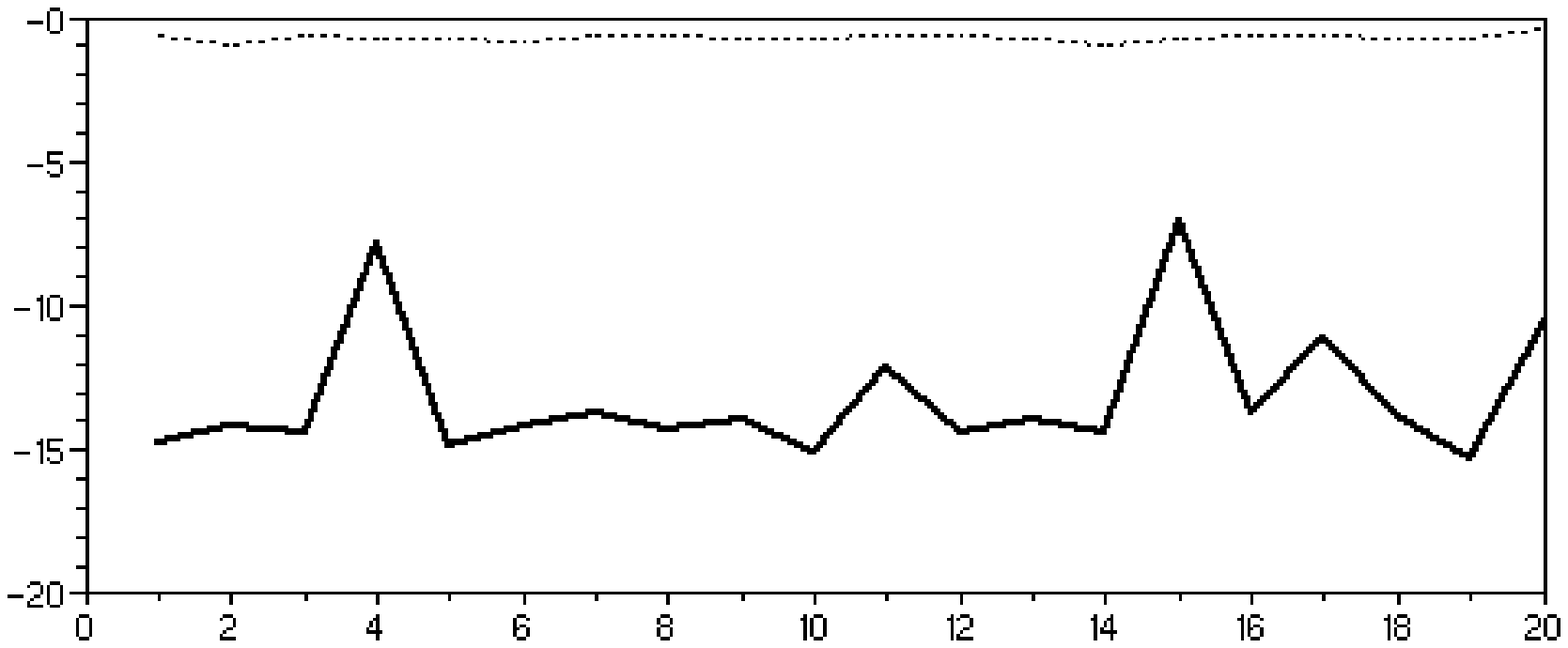}
 \includegraphics[scale=0.3]{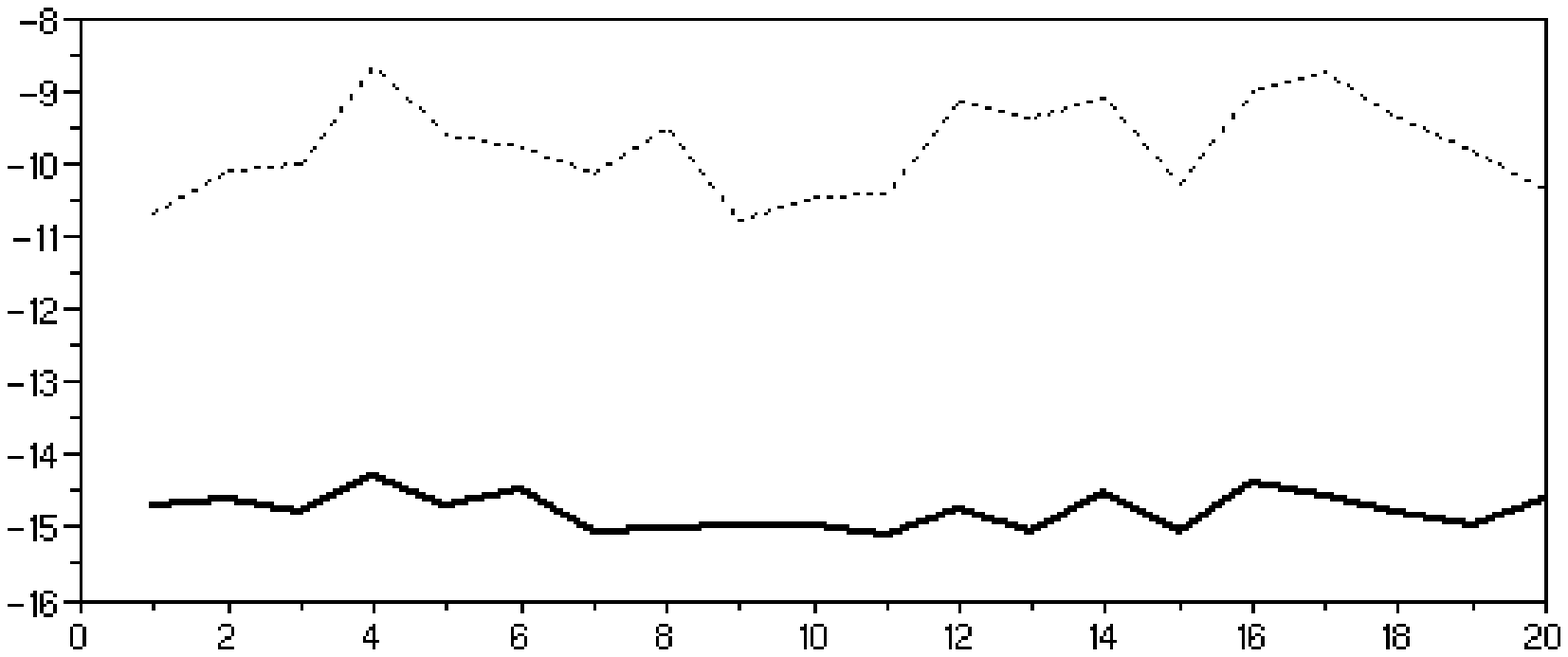}
  \includegraphics[scale=0.3]{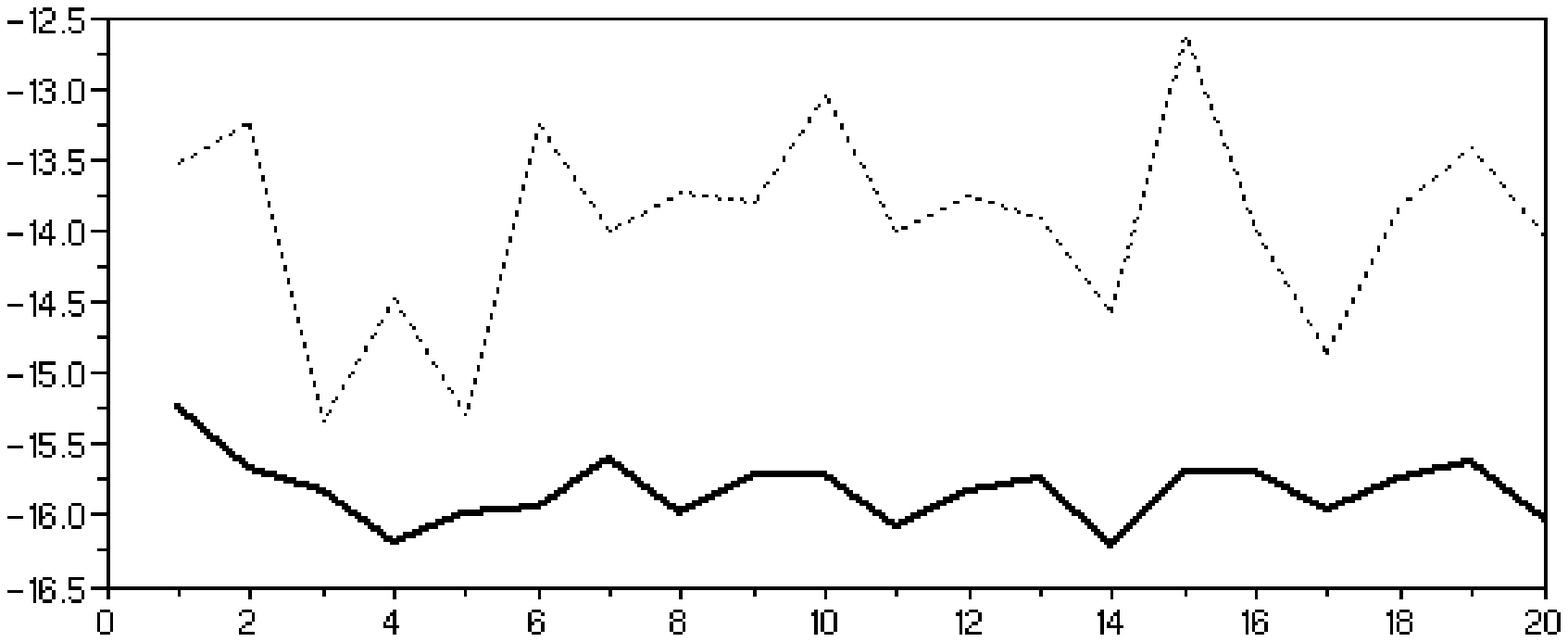}

  \caption{Backward error for randomly generated matrix pencils with $n=8$, $d=10$.}
  \label{n8d10}
\end{minipage}    
\end{figure}


\end{document}